\documentclass[11pt]{amsart}
\usepackage{amsthm,amsmath,amssymb,cite}

\theoremstyle{plain}
\newtheorem{theorem}{Theorem}[section]

\newtheorem{remark}[theorem]{Remark}

\setlength{\textwidth}{14cm}
\setlength{\textheight}{22cm}

\begin{document}
\title{$q$-Bernstein functions and applications}
\author{Valmir Krasniqi}
\address{Department of Mathematics and Computer Sciences, University of Prishtina, Republic of Kosova}
\email{vali.99@hotmail.com}
\author{Toufik Mansour}
\address{Department of Mathematics, University of Haifa, 3498838 Haifa, Israel}
\email{tmansour@univ.haifa.ac.il}

\begin{abstract}
We characterize of the $q$-Bernstein functions in terms of $q$-Laplace transform. Moreover, we present several results of $q$-completely monotonic, $q$-log completely monotonic and $q$-Bernstein functions.
\end{abstract}

\maketitle
\medskip

\noindent{\sc Keywords:} $q$-completely monotonic function; $q$-Bernstein function;  $q$-infinitely divisible function
\medskip

\noindent 2000 {\sc Mathematics Subject Classification:} 05A30

\section{Introduction}
In last decades, the $q$-calculus has been received a lot of attentions. The field has expanded explosively due to the fact that applications of
basic hypergeometric series to the diverse different branches of mathematics and applied mathematics are constantly being covered
(for example, see \cite{E1} and references therein).

Recall \cite{CCNK,E0,E1,Ex} some notation and definitions concerning $q$-calculus. The observation $\lim_{q\rightarrow1^-}\frac{1-q^x}{1-q}=x$ plays as a basic step for the theory of $q$-analogues, where $x,q\in\mathbb{C}$. We define $[x]=\frac{1-q^x}{1-q}$, is called the {\em $q$ number of $x$}, which it is introduced by Heine. Clearly, $\lim_{q\rightarrow1^-}[x]=x$. Throughout this paper we assume that $q$ satisfies $0<q<1$. The {\em $q$-Pochhammer symbol} is given by $[x]_k=\prod_{j=0}^{k-1}[x+j]$ with $[x]_0=1$. The {\em $q$-factorial} of $n$ is given by $[1]_n=[1][2]\cdots[n]=[n]!$ and the {\em $q$-Gauss binomial coefficients} are defined by $\big[{n\atop k}\big]_q=\frac{[n]!}{[k]![n-k]!}$. For the exponential function has given two analogues (see \cite{E0,E1}) as $e_q(x)=\sum_{n\geq0}\frac{x^n}{[n]!}$ and $E_q(x)=\sum_{n\geq0}q^{\binom{n}{2}}\frac{x^n}{[n]!}$, where the series converges for $|x|<\frac{1}{1-q}$ and $x\in\mathbb{C}$ respectively. Clearly, $E_q(x)=e_{1/q}(x)$  and we define  $E_q^{x}=(E_q(1))^{x}.$  The $q$-derivative (for example, see \cite{CCNK,E0,E1,Ex}) of an arbitrary function $f(x)$ is defined by
$$D_q(f(x))=\frac{f(qx)-f(x)}{x(q-1)},$$
where $x\neq0$. Obviously, if the function $f$ is differentiable then $\lim_{q\rightarrow1^-}D_q(f(x))=\frac{d}{dx}f(x)$. Clearly, $D_q(e_q(ax))=ae_q(ax)$ and $D_q(E_q(ax))=aE_q(aqx)$ for $|ax|<\frac{1}{1-q}$ and $a,x\in\mathbb{C}$ respectively. By simple induction on $n$, we obtain $D_q^n(e_q(ax))=a^ne_q(ax)$ and $D_q^n(E_q(ax))=a^nq^{\binom{n}{2}}E_q(aq^nx)$ for $n\geq0$.

In \cite{DG} it has been introduced the definition of $q$-completely monotonic function. A positive function $f$ is said to be {\em $q$-completely monotonic} (respectively, {\em $q$-log-completely monotonic}), if it an infinitely $q$-differentiable function such that $(-1)^nD_q^nf(z)\geq0$ for $n\geq0$ (respectively, $(-1)^{n}D_q^nLog_q f(z)\geq0$ for $n\geq 1$  ) and $z\in\mathbb{R}^+$, where we define $Log_q(f(x))=\log_{E_q}(f(x))$. When $q\rightarrow1^-$, we obtain the classical case: a positive function $f$ is said to be {\em completely monotonic} (respectively, {\em log-completely monotonic}), if it an infinitely differentiable function such that $(-1)^n(f(z))^{(n)}\geq0$ for $n\geq0$ (respectively, $(-1)^{n}(\log f(z))^{(n)}\geq0$ for $n\geq 1$  ) for and $z\in\mathbb{R}^+$. For instance, let $\Gamma$ be the Euler gamma function defined on $\mathbb{R}^+$ by $\Gamma(x)=\int_0^\infty t^{x-1}e^{-t}dt$ and the digamma (or psi) function is given by $\psi=\frac{d}{dx}\log\Gamma(x)=\frac{\Gamma'(x)}{\Gamma(x)}$. Then It is well known that $\psi'$ is strictly completely monotonic on $\mathbb{R}^+$, see \cite[Page 260]{AS}. The $q$-gamma function has the following integral representation, $\Gamma_q(x)=\int_0^\infty x^{t-1}E_q(-qx)d_qx$, and the $q$-analogue of the psi function is defined for $0<q<1$ as the logarithmic derivative of the $q$-gamma function, that is, $\psi_q(x)=\frac{d}{dx}\log\Gamma_q(x)$ (see \cite{A}). It is well known that $\psi'_q(x)$ is strictly completely monotonic on $\mathbb{R}^+$, see last section for further properties of the $q$-gamma and $q$-psi functions.

In \cite{K4}, Kim investigated some properties on the weighted $q$-Bernstein polynomials. Moreover, he derived some new identities between the weighted $q$-Bernstein polynomials and the twisted $q$-Bernoulli numbers (also, see \cite{CKK,K3}). Here, we interest on a general concept, namely,  $q$-Bernstein function. A positive function $f$ on $[0,+\infty)$ is said to be {\em $q$-Bernstein function} if it is infinity $q$-differentiable and $(-1)^{n-1}D_q^nf(z)\geq0$  for $n\geq1$. Clearly, a function $f$ non-negative and infinitely $q$-differentiable on $[0,+\infty)$ is $q$-Bernstein function if and only if $D_q(f(z))$ is $q$-completely monotonic function.

In this paper, we show several results on $q$-completely monotonic functions, $q$-log-completely monotonic and $q$-Bernstein functions.
Then we define $q$-analog of probability measures (for definitions, see next section) on $[0,+\infty)$ converges vaguely to a measure function $v$ (see \cite{SSV}). Using this definition, we present a characterization of the $q$-Bernstein functions in terms of $q$-Laplace transform. In last section, we present applications for our results.

\section{Main results}
We start by citing \cite[Proposition 2.7]{KK}:
\begin{itemize}
\item[(*)] If $f$ is a positive increasing (decreasing) function for $x\in[0,+\infty)$, then $f$ is $q$-increasing ($q$-decreasing) function, namely, $D_q(f(x))>0$ ($D_q(f(x))<0$).
\end{itemize}

\begin{theorem}\label{thff1}
If $f$ is a log-completely monotonic function, then $f$ is $q$-log-compl- etely monotonic function.
\end{theorem}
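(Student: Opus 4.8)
The plan is to reduce the $q$-statement to a purely classical statement about ordinary complete monotonicity, and then to prove that the operator $-D_q$ \emph{preserves} ordinary complete monotonicity; this stronger invariant is what lets an induction close cleanly. First I would record the reduction. Since $E_q^x=(E_q(1))^x$ and $E_q(1)=\sum_{n\ge0}q^{\binom n2}/[n]!>1$, the base satisfies $\log E_q(1)>0$, so
\[
\mathrm{Log}_q(f(x))=\log_{E_q}f(x)=\frac{1}{\log E_q(1)}\,\log f(x)=c\,\log f(x),\qquad c>0 .
\]
Hence $(-1)^nD_q^n\mathrm{Log}_q f=c\,(-1)^nD_q^n(\log f)$, and it suffices to prove $(-1)^nD_q^n(\log f)(z)\ge0$ for all $n\ge1$ and $z>0$. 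Writing $g=\log f$, the hypothesis that $f$ is log-completely monotonic says exactly that $(-1)^ng^{(n)}\ge0$ for $n\ge1$, i.e.\ that $\phi:=-g'$ is (ordinarily) completely monotonic on $(0,\infty)$.

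The heart of the matter is the following lemma: \emph{if $-\psi'$ is completely monotonic on $(0,\infty)$, then $-D_q\psi$ is completely monotonic.} To prove it, set $\omega=-\psi'\ge0$ (completely monotonic, hence $C^\infty$) and compute, using the fundamental theorem of calculus on the bounded interval $[qx,x]$ and the substitution $s=xu$,
\[
-D_q\psi(x)=\frac{\psi(qx)-\psi(x)}{(1-q)x}=\frac{1}{(1-q)x}\int_{qx}^x\omega(s)\,ds=\frac{1}{1-q}\int_q^1\omega(xu)\,du .
\]
Differentiating under the integral sign (legitimate since each $\omega^{(k)}$ is continuous on the relevant compact range of $xu$) gives $\frac{d^k}{dx^k}(-D_q\psi)(x)=\frac{1}{1-q}\int_q^1 u^k\omega^{(k)}(xu)\,du$, so that
\[
(-1)^k\frac{d^k}{dx^k}(-D_q\psi)(x)=\frac{1}{1-q}\int_q^1 u^k\bigl((-1)^k\omega^{(k)}(xu)\bigr)\,du\ge0,
\]
because $1-q>0$, $u^k\ge0$, and $\omega$ is completely monotonic. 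Taking $k=0$ here recovers the sign content of (*) as a special case.

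With the lemma in hand I would finish by induction on $n$, setting $h_n=(-1)^nD_q^n g$. The base case $h_1=-D_qg$ is the lemma applied to $\psi=g$, whose negative derivative $\phi=-g'$ is completely monotonic by hypothesis. For the inductive step, from $D_q^n g=(-1)^nh_n$ we get $D_q^{n+1}g=(-1)^nD_qh_n$, hence $h_{n+1}=-D_qh_n$; since $h_n$ is completely monotonic so is $-h_n'$, and the lemma yields that $h_{n+1}=-D_qh_n$ is completely monotonic, in particular nonnegative. Thus $(-1)^nD_q^n(\log f)\ge0$ for all $n\ge1$, and multiplying by $c>0$ gives $(-1)^nD_q^n\mathrm{Log}_q f\ge0$, as required. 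I expect the inductive step to be the genuine obstacle: a direct appeal to (*) only turns monotonicity into a one-sided sign for the \emph{first} $q$-derivative and does not by itself say anything about higher $q$-derivatives; the remedy is precisely the stronger invariant that $-D_q$ sends completely monotonic functions to completely monotonic functions, which propagates the full alternating sign pattern at every stage.
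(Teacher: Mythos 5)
Your proof is correct, but it takes a genuinely different route from the paper's, and in fact a tighter one. The paper argues by induction on $n$ using only the transfer principle (*): at order $1$ it converts the classical sign of $(\mathrm{Log}_q f)'$ into a sign for $D_q(\mathrm{Log}_q f)$, and in the inductive step it asserts that $(\mathrm{Log}_q f)^{(k+1)}>0$ makes $(\mathrm{Log}_q f)^{(k)}$ increasing, ``so $D_q^k(\mathrm{Log}_q f)>0$,'' and then applies (*) once more to $D_q^k(\mathrm{Log}_q f)$ --- a step that silently interchanges classical and $q$-derivatives and, as written, never establishes that the function to which (*) is being applied is classically monotone. You anticipated precisely this weakness (``a direct appeal to (*) only turns monotonicity into a one-sided sign for the first $q$-derivative''), and your key lemma --- that $-D_q$ maps any function whose negative derivative is completely monotonic to a completely monotonic function, via the averaging representation $-D_q\psi(x)=\frac{1}{1-q}\int_q^1(-\psi')(xu)\,du$ --- is exactly the stronger invariant that closes the induction: it propagates the full classical alternating sign pattern through every application of $D_q$, so each iterate $h_n=(-1)^nD_q^n(\log f)$ is itself classically completely monotonic, not merely nonnegative, and the next step of the induction has the hypothesis it needs. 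Your explicit reduction $\mathrm{Log}_q f=c\,\log f$ with $c=1/\log E_q(1)>0$ also makes precise what the paper handles implicitly by noting $2\le E_q\le e$ (only positivity of $\log E_q(1)$ is needed, so that signs are preserved). In short: the paper's proof is shorter but rests on repeated, loosely justified invocations of (*) that mix classical and $q$-derivatives; your integral-representation lemma yields a self-contained and rigorous argument, at the modest cost of tracking full complete monotonicity of each iterate rather than signs alone, and it subsumes the sign content of (*) as the $k=0$ case.
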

\begin{proof}
Let $f(q)=\sum_{n\geq0} \frac{q^{\binom{n}{2}}}{[n]!}$. Obviously, $\lim_{q\to 0^+} f(q)=2$ and $\lim_{q\to 1^-} f(q)=e$. By the fact that $f(q)$ is an increasing function on $q$ we have  $2\leq E_q\leq e$ for all $q\in(0,1)$.

We have to prove that if $(-1)^n(Log_q(f(x)))^{(n)}\geq0$ then $$(-1)^nD_q^n(Log_q(f(x)))\geq0,$$ for all $n\geq1$. We proceed the proof by induction on $n$.
For $n=1$ it holds, because if $-(Log_qf(x))'>0$ or $(Log_q f(x))'<0$ then by (*) it follows that $-D_q(Log_q(f(x)))>0$ or $D_q(Log_q(f(x)))<0$. We assume the claim holds for all $n=1,2,\ldots,k$, that is, if $(-1)^n(Log_q(f(x)))^{(n)}\geq0$ then $(-1)^nD_q^n(Log_q(f(x)))\geq0$, for all $n=1,2,\ldots,k$. We will prove that it holds for $n=k+1$, that is, if $(-1)^{k+1}(\log(f(x)))^{(k+1)}\geq0$ then $(-1)^{k+1}D_q^{k+1}(Log_q(f(x)))\geq0$. Indeed, if $(Log_q(f(x)))^{(k+1)}>0$ then $(Log_q(f(x)))^{(k)}$ is an increasing function, so $D_q^k(Log_q(f(x)))>0$ and then by using (*), we obtain that $D_q(D_q^k(Log_q(f(x))))>0$. Similarly, the case $(Log_q(f(x)))^{(k+1)}<0$.
\end{proof}

To present our next result, we recall that the $q$-analogue of the partial Bell polynomials are given by (see \cite{B,R})
$$B_{n,k,q}(x_1,x_2,\ldots,x_{n-k+1})=\sum_{b_1+b_2+\cdots+b_k=n,\,b_i\geq1}\frac{[n]!\prod_{j=1}^kx_{b_j}}{\prod_{j=1}^k[b_1+\cdots+b_j]\prod_{j=1}^k[b_j-1]!}.$$
The $q$-analogue of Fa\'a di Bruno's formula is given by  (see \cite{XC})
$$D_q^ng(h(x))=\sum_{k=1}^nD^{k}_q(g(x))\circ h(x)B_{n,k,q}(h_{b_1,0},h_{b_2,b_1},h_{b_3,b_1+b_2},\ldots),$$
for all $n\geq1$, where $h_{i,j}=h_{i,j}(x)=D^{i}_q(h(q^jx))$. Thus, for $n\geq1$,
\begin{align} D_q^ng(h(x))=\sum_{k=1}^nD^{k}_q(g(x))\circ h(x)\sum_{b_1+\cdots+b_k=n,\,b_i\geq1}\frac{[n]!\prod_{j=1}^kD^{b_j}_q(h(q^{b_1+\cdots+b_{j-1}}x))}{\prod_{j=1}^k[b_1+\cdots+b_j]\prod_{j=1}^k[b_j-1]!}.\label{eqFaa}
\end{align}

\begin{theorem}\label{th001}
Let $f(x)$ be any $q$-log-completely monotonic function. Then $f$ is a $q$-completely monotonic function.
\end{theorem}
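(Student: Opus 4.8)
The plan is to translate the classical implication "log-completely monotonic $\Rightarrow$ completely monotonic" into the $q$-setting, using the $q$-Fa\`a di Bruno formula \eqref{eqFaa} as the main computational engine. By definition, $f$ being $q$-log-completely monotonic means $(-1)^n D_q^n \mathrm{Log}_q f(x) \geq 0$ for all $n\geq 1$, and we want to deduce $(-1)^n D_q^n f(x) \geq 0$ for all $n\geq 0$. The key observation is that $f = E_q^{\,\mathrm{Log}_q f}$, or more usefully $f(x) = g(h(x))$ where $h(x) = \mathrm{Log}_q f(x)$ and $g(u) = E_q^{\,u} = (E_q(1))^{u}$ is the $q$-exponential base $E_q$. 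First I would verify that $g$ is itself $q$-completely monotonic in the appropriate sense after accounting for sign, i.e.\ that the relevant $q$-derivatives $D_q^k g$ are positive, since $E_q > 1$ (indeed $2 \le E_q \le e$ as established in the proof of Theorem~\ref{thff1}) so $g(u) = E_q^{\,u}$ is positive and increasing.

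Next I would substitute into \eqref{eqFaa} to expand $D_q^n f(x) = D_q^n g(h(x))$ as a sum over $k$ from $1$ to $n$ of $D_q^k(g)\circ h(x)$ times a $q$-Bell-polynomial built from the quantities $D_q^{b_j}\!\big(h(q^{b_1+\cdots+b_{j-1}}x)\big)$. The strategy is a sign analysis: each factor $D_q^{b_j} h$ carries a sign $(-1)^{b_j}$ (this is precisely the $q$-log-complete-monotonicity hypothesis applied at the shifted argument $q^{b_1+\cdots+b_{j-1}}x$, using that $x>0$ forces $q^{\cdots}x>0$), so the product $\prod_{j=1}^k D_q^{b_j} h$ contributes sign $(-1)^{b_1+\cdots+b_k} = (-1)^n$. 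The combinatorial coefficients $\frac{[n]!}{\prod[b_1+\cdots+b_j]\prod[b_j-1]!}$ are manifestly positive since $0<q<1$ makes every $[m]>0$, and the $D_q^k g$ factor is positive. Summing over all compositions, every term in \eqref{eqFaa} then carries the common sign $(-1)^n$, from which $(-1)^n D_q^n f(x) \geq 0$ follows directly. The case $n=0$ is just the positivity of $f$.

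The main obstacle I anticipate is the bookkeeping of signs at the shifted arguments: the $q$-Fa\`a di Bruno formula evaluates the inner $q$-derivatives of $h$ not at $x$ but at $q^{b_1+\cdots+b_{j-1}}x$, so I must confirm that $q$-log-complete-monotonicity is a pointwise statement valid at every point of $\mathbb{R}^+$ (it is, by definition, since the inequality $(-1)^n D_q^n \mathrm{Log}_q f(z)\ge 0$ holds for all $z\in\mathbb{R}^+$), and that $q^{b_1+\cdots+b_{j-1}}x$ remains in $\mathbb{R}^+$. A secondary technical point is justifying the sign of $D_q^k g$ for $g(u)=E_q^{\,u}$; since $E_q^{\,u}$ is an increasing $q$-exponential this should follow from the analogue of $D_q^n(E_q(ax))=a^n q^{\binom{n}{2}} E_q(aq^n x)$ recorded in the introduction, giving positivity of all iterated $q$-derivatives of $g$. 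Once these two points are pinned down, the sign of each summand is uniform and the conclusion is immediate; the argument is essentially a uniform-sign count rather than an induction, though an induction on $n$ paralleling the proof of Theorem~\ref{thff1} would serve as an alternative route if the Fa\`a di Bruno bookkeeping proves delicate.
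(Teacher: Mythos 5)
Your proposal is correct and takes essentially the same route as the paper: the paper's proof applies the $q$-Fa\`a di Bruno formula \eqref{eqFaa} with $g(x)=E_q^{x}$ and $h(x)=Log_q(f(x))$ and performs exactly your uniform sign count, with each factor $D_q^{b_j}h$ at the shifted argument contributing $(-1)^{b_j}$ so that the product carries sign $(-1)^{b_1+\cdots+b_k}=(-1)^n$, while the composition coefficients and $D_q^{k}(g)$ are positive. The one small point where you should follow the paper rather than your sketch is the positivity of $D_q^{k}(E_q^{x})$: the paper gets it from the classical derivative $(\log E_q)^{k}E_q^{x}>0$ transferred to $q$-derivatives via \cite[Theorem 2.8]{KK}, whereas the formula $D_q^n(E_q(ax))=a^nq^{\binom{n}{2}}E_q(aq^nx)$ you invoke concerns the $q$-exponential series $E_q(ax)$ and does not apply to $E_q^{x}=(E_q(1))^{x}$, which is an ordinary exponential in $x$.
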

\begin{proof}
By applying \eqref{eqFaa} with $g(x)=E_q^{x}$ and $h(x)=Loq_q(f(x))$, we obtain
\begin{align*}
&D_q^ng(h(x))\\
&=\sum_{k=1}^nD^{k}_q(g(x))\circ h(x)\sum_{b_1+b_2+\cdots+b_k=n,\,b_i\geq1}\frac{[n]!\prod_{j=1}^kD^{b_j}_q(Loq_qf(q^{b_1+\cdots+b_{j-1}}x))}{\prod_{j=1}^k[b_1+\cdots+b_j]\prod_{j=1}^k[b_j-1]!}.
\end{align*}
which implies
\begin{align*}
&(-1)^nD_q^ng(h(x))\\
&=\sum_{k=1}^nD^{k}_q(E_q^{x})\circ Loq_q(f(x))\sum_{b_1+\cdots+b_k=n,\,b_i\geq1}\frac{[n]!\prod_{j=1}^kD^{b_j}_q(Loq_qf(q^{b_1+\cdots+b_{j-1}}x))}{\prod_{j=1}^k[b_1+\cdots+b_j]\prod_{j=1}^k[b_j-1]!}.\end{align*}
Since  $f$ is a $q$-log-completely monotonic function, we have  $$(-1)^{b_j}D^{b_j}_q(Loq_qf(q^{b_1+\cdots+b_{j-1}}x))\geq0,$$
for any $b_j\geq0,$ and $\frac{d^{k}(E_q^{x})}{dx^{k}}=(\log E_q)^{k}\cdot E_q^{x}>0$, by Theorem 2.8 see \cite{KK} we have $\frac{d_q^{k}(E_q^{x})}{d_qx^{k}}>0,$ then $ D^{k}_q(E_q^{x})\circ Loq_q(f(x))>0.$ Hence, $(-1)^nD_q^nf(x)=(-1)^nD_q^ng(h(x))\geq0$, for all $n\geq0$, which completes the proof.
\end{proof}

\begin{theorem}
Let $f:\mathbb{R}^+\rightarrow\mathbb{R}^+$. Then, $f$ is a $q$-Bernstein function if and only if $E_q^{-tf}$, $t>0$ is a $q$-completely monotonic function.
\end{theorem}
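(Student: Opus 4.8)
The plan is to treat this as the $q$-analogue of the classical duality between Bernstein and completely monotonic functions, where $f$ is Bernstein if and only if $e^{-tf}$ is completely monotonic for every $t>0$. Write $c=\log E_q(1)$; from the estimate $2\le E_q(1)\le e$ established in the proof of Theorem \ref{thff1} we have $c>0$, and since $E_q^{x}=(E_q(1))^{x}=e^{cx}$ I may record the convenient identities $E_q^{-tf(x)}=e^{-ctf(x)}$ and $\frac{\partial}{\partial t}\big|_{t=0}E_q^{-tf(x)}=-cf(x)$. Both directions will be driven by the $q$-Fa\`a di Bruno formula \eqref{eqFaa} applied to the pair $g(x)=E_q^{-tx}$ and $h(x)=f(x)$, so that $g(h(x))=E_q^{-tf(x)}$.

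For the forward direction, assume $f$ is a $q$-Bernstein function. The case $n=0$ is immediate since $E_q^{-tf}$ is positive, so I fix $n\ge1$ and expand $D_q^nE_q^{-tf(x)}$ by \eqref{eqFaa}, counting signs termwise exactly as in the proof of Theorem \ref{th001}. The function $x\mapsto E_q^{-tx}=e^{-ctx}$ is classically completely monotonic, so $(-1)^k\frac{d^k}{dx^k}E_q^{-tx}=(ct)^kE_q^{-tx}\ge0$, and by the transfer principle (Theorem 2.8 of \cite{KK}) used in Theorem \ref{th001} this yields $(-1)^kD_q^k(E_q^{-tx})\ge0$; hence $(-1)^kD_q^k(E_q^{-tx})\circ f(x)\ge0$. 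On the other hand the $q$-Bernstein hypothesis gives $(-1)^{b_j-1}D_q^{b_j}f\ge0$, so in each summand the product $\prod_{j=1}^kD_q^{b_j}f(\cdots)$ carries the sign $(-1)^{\sum_j(b_j-1)}=(-1)^{n-k}$, while the factors $[n]!$, $[b_1+\cdots+b_j]$ and $[b_j-1]!$ are positive. Multiplying, each summand of $(-1)^nD_q^nE_q^{-tf}$ has sign $(-1)^n\cdot(-1)^k\cdot(-1)^{n-k}=1$, whence $(-1)^nD_q^nE_q^{-tf}\ge0$ and $E_q^{-tf}$ is $q$-completely monotonic.

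For the converse, suppose $E_q^{-tf}$ is $q$-completely monotonic for every $t>0$, so that $F_n(t):=(-1)^nD_q^nE_q^{-tf(x)}\ge0$ for all $n\ge1$, $t>0$, and $x\in\mathbb{R}^+$. At $t=0$ the function $E_q^{-tf}\equiv1$ is constant in $x$, so $F_n(0)=0$. Since $D_q^n$ acts in $x$ as a finite linear combination of the evaluations $e^{-ctf(q^jx)}$ with coefficients independent of $t$, it commutes with $\frac{\partial}{\partial t}$, and $F_n$ is smooth in $t$. Because $F_n(t)\ge0=F_n(0)$, the one-sided derivative satisfies $F_n'(0^+)\ge0$; computing it by interchanging $\frac{\partial}{\partial t}$ with $D_q^n$ gives
\begin{align*}
0\le F_n'(0^+)=(-1)^nD_q^n\Big(\tfrac{\partial}{\partial t}\big|_{t=0}E_q^{-tf}\Big)=(-1)^nD_q^n(-cf)=c\,(-1)^{n-1}D_q^nf.
\end{align*}
As $c>0$ this forces $(-1)^{n-1}D_q^nf\ge0$ for every $n\ge1$, which together with the positivity of $f$ shows that $f$ is a $q$-Bernstein function.

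The main obstacle is not the algebra but the two transfer and interchange steps. In the forward direction everything hinges on knowing that the outer exponential $E_q^{-tx}$ is itself $q$-completely monotonic, that is, on the classical-to-$q$ comparison (*) and Theorem 2.8 of \cite{KK} invoked for $D_q^k(E_q^{-tx})$; once that sign is secured, the Fa\`a di Bruno bookkeeping is routine. In the converse the delicate point is the legitimacy of differentiating $F_n(t)$ in the parameter $t$ and of passing $\frac{\partial}{\partial t}$ through $D_q^n$, which I would justify from the explicit finite-sum expression for $D_q^n$ together with the smoothness of $t\mapsto e^{-ctf(x)}$; the sign extraction is then the elementary observation that a nonnegative smooth function vanishing at $t=0$ has nonnegative right derivative there.
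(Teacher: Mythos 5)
Your proof is correct and, at its core, takes the same route as the paper: the forward direction realizes $E_q^{-tf}$ as the composition of the $q$-completely monotonic exponential $g(x)=E_q^{-tx}$ with the $q$-Bernstein function $f$ --- the paper simply cites \cite[Proposition 2.12]{KK} for this, whereas you re-derive that composition lemma from scratch via the sign count in the $q$-Fa\`a di Bruno formula \eqref{eqFaa}, mirroring the paper's own proof of Theorem \ref{th001} --- and the converse recovers $f$ from the family $(E_q^{-tf})_{t>0}$ by letting $t\to0^+$, which the paper phrases as the pointwise limit $f=\lim_{t\to0^+}(1-E_q^{-tf})/(t\log E_q)$ of the $q$-Bernstein functions $1-E_q^{-tf}$, and you phrase as the right derivative $F_n'(0^+)=\lim_{t\to0^+}F_n(t)/t\ge0$; these are the same limit in different clothing. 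If anything, your converse is the more complete of the two: the interchange of $\partial/\partial t$ with $D_q^n$, justified by the finite-sum form of the $q$-derivative, is precisely the point the paper leaves implicit when it asserts without argument that a pointwise limit of $q$-Bernstein functions is again $q$-Bernstein (true here for the same finite-sum reason, but unstated). One cosmetic remark on the forward direction: in \eqref{eqFaa} the inner factors are $D_q^{b_j}$ applied to the dilated functions $f(q^{b_1+\cdots+b_{j-1}}x)$ rather than to $f$ itself, but dilation by $q^m$ only contributes the positive factor $q^{mb_j}$, so your claimed sign $(-1)^{n-k}$ for the product is unaffected; likewise, your appeal to the classical-to-$q$ transfer (Theorem 2.8 of \cite{KK}) for $(-1)^kD_q^k(E_q^{-tx})\ge0$ is the same appeal the paper itself makes in Theorem \ref{th001}, so your proof is no less self-contained than the paper's on that point.
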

\begin{proof}
Let $f$ be any $q$-Bernstein function on $\mathbb{R}^+$, and we define $g(x)=E_q^{-xt}$, $t>0$, to be a $q$-completely monotonic function. By  \cite[Proposition 2.12 ]{KK}  we have that $g\circ f=E_q^{-tf}$ is a $q$-completely monotonic function.
Conversely, suppose that $E_q^{-tf}$ is a $q$-completely monotonic, then $1-E_q^{-tf}$ is a $q$-Bernstein function. Now, we find that  $f=\lim_{t\to 0^+}{\frac{1-E_q^{-tf}}{t\cdot\log E_q}}$ is a $q$-Bernstein function.
\end{proof}

\begin{theorem}
Let $g$ be any $q$-completely monotonic function. Then, $g\circ f$ is a $q$-completely monotonic if and only if $E_q^{-f}$ is a $q$-log-completely monotonic.
\end{theorem}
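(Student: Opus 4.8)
The plan is to collapse both halves of the equivalence onto the single property that $f$ be a $q$-Bernstein function. The pivotal remark is that, since $Log_q(\cdot)=\log_{E_q}(\cdot)$, we have $Log_q(E_q^{-f})=-f$. Hence, by the linearity of $D_q$,
$$(-1)^nD_q^n Log_q(E_q^{-f})=(-1)^nD_q^n(-f)=(-1)^{n-1}D_q^nf,\qquad n\geq1.$$
So $E_q^{-f}$ is $q$-log-completely monotonic if and only if $(-1)^{n-1}D_q^nf(z)\geq0$ for all $n\geq1$, i.e. if and only if $f$ is a $q$-Bernstein function; here the positivity and infinite $q$-differentiability of $E_q^{-f}$ are free, because $2\leq E_q\leq e$ by the proof of Theorem \ref{thff1}. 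I would record this as a preliminary remark, since it trades the $q$-log-complete-monotonicity hypothesis on $E_q^{-f}$ for the more tractable Bernstein hypothesis on $f$.

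For the implication ($\Leftarrow$), assume $E_q^{-f}$ is $q$-log-completely monotonic. By the preliminary remark $f$ is then a $q$-Bernstein function, and because $g$ is $q$-completely monotonic, \cite[Proposition 2.12]{KK} --- the very composition result used to prove the preceding theorem --- gives immediately that $g\circ f$ is $q$-completely monotonic.

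The implication ($\Rightarrow$) is where the substance lies: from the $q$-complete monotonicity of $g\circ f$ I must recover the Bernstein sign pattern $(-1)^{n-1}D_q^nf\geq0$. The natural tool is the $q$-Fa\'a di Bruno formula \eqref{eqFaa}. Its $k=1$ term is $(D_qg\circ f)\,D_q^nf$, while every term with $k\geq2$ is built from the Bell polynomial $B_{n,k,q}$ in the lower $q$-derivatives $D_q^{b_j}f$ with $b_j\leq n-1$. I would proceed by induction on $n$: the base case $n=1$ is the $q$-chain rule $D_q(g\circ f)=(D_qg\circ f)\,D_qf$, and since $D_qg\circ f\leq0$ (as $g$ is $q$-completely monotonic) and $D_q(g\circ f)\leq0$, the factor $D_qf$ must be $\geq0$, which is exactly the $n=1$ Bernstein inequality. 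At the inductive step one solves the Fa\'a di Bruno identity for $D_q^nf$, isolating it in the $k=1$ term and expressing it through $D_q^n(g\circ f)$ and the already-signed lower-order Bell terms.

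The main obstacle is precisely this sign extraction. Each summand of \eqref{eqFaa} carries the sign $(-1)^n$ once $f$ is assumed Bernstein through order $n-1$ (the nonnegative factor $(-1)^kD_q^kg\circ f$ meets the factor of sign $(-1)^{n-k}$ coming from the Bell polynomial), so the identity for $D_q^n(g\circ f)$ becomes a sum of two quantities of the same sign $(-1)^n$, and a naive subtraction does not by itself fix the sign of $D_q^nf$. To push the argument through one must quantify the non-degeneracy of $g$ --- for instance assuming $g$ is strictly $q$-completely monotonic, so that $D_qg\circ f<0$ and the cofactor of $D_q^nf$ never vanishes --- and then control the lower-order terms so that they cannot dominate; without such a hypothesis the statement fails, since a constant $g$ makes $g\circ f$ trivially $q$-completely monotonic for every $f$. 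I would therefore phrase ($\Rightarrow$) under a strict $q$-complete-monotonicity assumption on $g$ and carry the Bell-polynomial sign bookkeeping through the induction.
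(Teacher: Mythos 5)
Your $(\Leftarrow)$ half is exactly the paper's argument: the identity $Log_q(E_q^{-f})=-f$ converts $q$-log-complete monotonicity of $E_q^{-f}$ into the $q$-Bernstein property of $f$, and then \cite[Proposition 2.12]{KK} gives that $g\circ f$ is $q$-completely monotonic. The gap is in $(\Rightarrow)$, and it stems from a quantifier misreading. In this theorem, ``let $g$ be any $q$-completely monotonic function'' plays the role of the universal hypothesis in the classical composition theorem (cf.\ \cite[Theorem 3.7]{SSV}): the condition is that $g\circ f$ be $q$-completely monotonic for \emph{every} $q$-completely monotonic $g$. Accordingly, the paper's forward proof does no inversion at all: it simply specializes the test function to $g(x)=E_q^{-x}$ (implicitly, to the whole family $E_q^{-tx}$, $t>0$), concludes via Theorem 2.3 that $f$ is $q$-Bernstein, and hence, by the same identity $Log_q(E_q^{-f})=-f$, that $E_q^{-f}$ is $q$-log-completely monotonic. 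No Fa\'a di Bruno induction and no nondegeneracy hypothesis on $g$ is needed.

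Under your single-fixed-$g$ reading the forward implication is simply false, and your proposed repair (assume $g$ strictly $q$-completely monotonic) does not rescue it, so the step you left open --- ``control the lower-order terms so that they cannot dominate'' --- cannot be filled in. Already in the classical limit $q\to1^-$: take $g(x)=e^{-x}$, which is strictly completely monotonic, and $f(x)=-\log(e^{-x}+e^{-2x})$. Then $g\circ f=e^{-x}+e^{-2x}$ is completely monotonic, but $f'(x)=1/(1+e^{-x})$ is increasing, so $f'$ is not completely monotonic, $f$ is not a Bernstein function, and $e^{-f}$ is not log-completely monotonic. The obstruction you flagged in the Bell-polynomial sign bookkeeping is therefore not a technical difficulty but a genuine impossibility: complete monotonicity of $g\circ f$ for one fixed $g$ controls only one ``exponent'' of $f$, whereas the conclusion ($f$ Bernstein, equivalently $E_q^{-tf}$ $q$-completely monotonic for all $t>0$, by Theorem 2.3) requires the whole family. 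To be fair, the paper's own forward proof is itself terse --- as written it only concludes that $E_q^{-f}$ is $q$-completely monotonic rather than $q$-log-completely monotonic --- but the intended route through Theorem 2.3, under the universal reading of $g$, is the standard and correct one.
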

\begin{proof}
Let $g$ be any $q$-completely monotonic function and suppose that $g\circ f$ is a $q$-completely monotonic function. Now, we define $g(x)=E_q^{-x}$,  since the function $g(x)=E_q^{-x}$ is a $q$-completely monotonic, then $g\circ f=E_q^{-f}$ is a $q$-completely monotonic.
Conversely, suppose that $E_q^{-f}$ is a $q$-log-completely monotonic function, which implies that $Log_q(E_q^{-f})=-f$, in other words
$(-1)^nD_q^n(Log_q (E_q^{-f}))=(-1)^{n-1}D_q^n f>0,$ then we have  $f$ is a $q$-Bernstein function, since $g\circ f\geq 0$, by using $q$-analogue of Fa\'a di Bruno's formula (or \cite[Proposition 2.12 ]{KK}), we have that $g\circ f$ is a $q$-completely monotonic function .

\end{proof}

\begin{theorem}
Let $f,g$ be any $q$-Bernstein functions. Then $g\circ f$ is a $q$-Bernstein function.
\end{theorem}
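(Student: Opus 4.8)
The plan is to apply the $q$-analogue of Fa\'a di Bruno's formula \eqref{eqFaa} directly to the composition $g\circ f$, taking $h=f$, and then to track the sign of every summand. Since $f$ and $g$ are both infinitely $q$-differentiable, so is $g\circ f$, and \eqref{eqFaa} expresses its derivatives as
$$D_q^n(g\circ f)(x)=\sum_{k=1}^n D_q^{k}(g(x))\circ f(x)\sum_{b_1+\cdots+b_k=n,\,b_i\geq1}\frac{[n]!\prod_{j=1}^kD_q^{b_j}(f(q^{b_1+\cdots+b_{j-1}}x))}{\prod_{j=1}^k[b_1+\cdots+b_j]\prod_{j=1}^k[b_j-1]!},$$
for $n\geq1$. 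First I would record the signs of the three kinds of factors. Because $g$ is a $q$-Bernstein function and $f(x)\in\mathbb{R}^+$, we have $(-1)^{k-1}D_q^{k}(g(x))\circ f(x)\geq0$ for every $k\geq1$. Because $f$ is a $q$-Bernstein function, $(-1)^{b_j-1}D_q^{b_j}(f(q^{b_1+\cdots+b_{j-1}}x))\geq0$ for every $b_j\geq1$. Finally, since $0<q<1$, all the $q$-integers and $q$-factorials $[n]!$, $[b_1+\cdots+b_j]$, and $[b_j-1]!$ are strictly positive, so the whole rational coefficient in each summand is positive.

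Next I would carry out the sign bookkeeping. For a fixed composition $b_1+\cdots+b_k=n$ the product of the $k$ factors coming from $f$ carries the sign $\prod_{j=1}^k(-1)^{b_j-1}=(-1)^{\sum_{j=1}^k(b_j-1)}=(-1)^{n-k}$, using $\sum_{j=1}^k b_j=n$. Multiplying by the sign $(-1)^{k-1}$ of the factor coming from $g$ yields $(-1)^{n-k}\cdot(-1)^{k-1}=(-1)^{n-1}$, which is independent of both $k$ and of the chosen composition. Hence every summand equals $(-1)^{n-1}$ times a non-negative quantity, and therefore $(-1)^{n-1}D_q^n(g\circ f)(x)\geq0$ for all $n\geq1$. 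Combined with the fact that $g\circ f\geq0$ (since $g$ is positive and $f(x)\geq0$), this is precisely the defining condition for $g\circ f$ to be a $q$-Bernstein function.

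The main obstacle is the \emph{uniformity} of the sign across all terms: the argument hinges entirely on the identity $\sum_{j=1}^k(b_j-1)=n-k$, which makes the partition-dependent sign $(-1)^{n-k}$ combine with the $g$-sign $(-1)^{k-1}$ into the single factor $(-1)^{n-1}$, allowing it to be pulled out of the whole sum. One must also be careful that $D_q^{k}g$ is evaluated at $f(x)$ rather than at $x$, so that its sign condition is applied at the point $f(x)\in\mathbb{R}^+$; this is legitimate because $f$ maps $\mathbb{R}^+$ into $\mathbb{R}^+$. Apart from checking the positivity of the denominators, which is immediate from $0<q<1$, the remainder of the verification is routine.
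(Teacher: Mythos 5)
Your proof is correct, but it takes a genuinely different route from the paper's. The paper never expands any $q$-derivative of $g\circ f$: it argues by duality with $q$-completely monotonic functions, invoking \cite[Proposition 2.12]{KK} (composition of a $q$-completely monotonic function with a $q$-Bernstein function is $q$-completely monotonic, together with the converse characterization). Concretely, for an arbitrary $q$-completely monotonic $h$, first $h\circ g$ is $q$-completely monotonic since $g$ is $q$-Bernstein, then $h\circ(g\circ f)=(h\circ g)\circ f$ is $q$-completely monotonic since $f$ is $q$-Bernstein, and since this holds for every such $h$, the same proposition yields that $g\circ f$ is $q$-Bernstein. You instead prove the composition rule directly from the $q$-Fa\'a di Bruno formula \eqref{eqFaa}, with the sign bookkeeping $(-1)^{n-k}\cdot(-1)^{k-1}=(-1)^{n-1}$ uniform over all compositions $b_1+\cdots+b_k=n$; in effect you reprove the cited proposition rather than use it, which makes your argument more self-contained (modulo \eqref{eqFaa}, which the paper itself records and uses the same way in Theorem 2.2, so you are on equal footing regarding the delicate status of a $q$-chain rule for genuine composition), while the paper's argument is shorter and cleanly iterable at the cost of an external characterization. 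One point you should make explicit: the inner factors in \eqref{eqFaa} are $q$-derivatives of the dilated functions $x\mapsto f(q^{b_1+\cdots+b_{j-1}}x)$, and since $D_q^{b}\bigl(f(q^{c}x)\bigr)=q^{cb}\,(D_q^{b}f)(q^{c}x)$ with $q^{cb}>0$ and $q^{c}x\in[0,+\infty)$, the sign is indeed governed by the $q$-Bernstein property of $f$ at the dilated point; your one-line appeal to ``$f$ is $q$-Bernstein'' silently uses this positive dilation factor, but the conclusion stands.
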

\begin{proof}
Suppose that $f,g$ are $q$-Bernstein functions. For any $h$, $q$-completely monotonic function, we use \cite[Proposition 2.12]{KK} to get that $h\circ f$ is a $q$-completely monotonic function, and then $h\circ (g\circ f)=(h\circ g)\circ f$ is a $q$-completely monotonic function. Now, since $h\circ (g\circ f)$ is a $q$-completely monotonic function, by \cite[Proposition 2.12 ]{KK} we have that $g\circ f$ is a $q$-Bernstein function.
\end{proof}

\begin{theorem}
(i) Let $f(x)\geq 0$ be any $q$-completely monotonic function on $\mathbb{R}^+$ and let $a > 0$.
Then $f(x)-f(x+a)$ is a $q$-completely monotonic function on $\mathbb{R}^+$. (ii) Let $f(x)\geq  0$ and let
$f(x)-f(x+a)$ be a $q$-completely monotonic function on $\mathbb{R}^+$ for each a in some right-hand
neighborhood of $0$. Then $f(x)$ is a $q$-completely monotonic function on $\mathbb{R}^+$.
\end{theorem}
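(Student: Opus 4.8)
The plan is to mirror the classical fact that $f(x)-f(x+a)$ is completely monotonic whenever $f$ is, replacing ordinary derivatives by $q$-derivatives and using property (*) together with the cone and limit structure of the $q$-completely monotonic class to bridge between additive translation and $q$-differentiation.

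For part (i), I would fix $n\ge0$ and set $g_n(x)=(-1)^nD_q^nf(x)$. Since $f$ is $q$-completely monotonic, $g_n\ge0$, and moreover $D_qg_n(x)=(-1)^nD_q^{n+1}f(x)=-g_{n+1}(x)\le0$; since $(-1)^mD_q^mg_n=g_{m+n}\ge0$, each $g_n$ is in fact itself $q$-completely monotonic, and in particular $q$-decreasing. One then wants to upgrade this $q$-monotonicity to the ordinary inequality $g_n(x)\ge g_n(x+a)$ for $a>0$ (the converse direction to (*)) and to identify $(-1)^nD_q^n\big(f(x)-f(x+a)\big)$ with $g_n(x)-g_n(x+a)\ge0$, which would give the claim for all $n$ simultaneously.

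For part (ii), I would use that the $q$-completely monotonic functions form a convex cone closed under pointwise limits: each defining inequality $(-1)^nD_q^n(\cdot)(x)\ge0$ involves only the finitely many point evaluations at $x,qx,\dots,q^nx$, so it is preserved under multiplication by the positive constant $1/a$ and under passage to a limit. Thus $\tfrac1a\big(f(x)-f(x+a)\big)$ is $q$-completely monotonic for each small $a>0$, and letting $a\to0^+$ produces $-f'(x)$ as a $q$-completely monotonic function. Reading this as $(-1)^nD_q^n(-f'(x))\ge0$ gives $(-1)^{n+1}D_q^nf'(x)\ge0$ for $n\ge0$; combined with the hypothesis $f\ge0$ (the case $n=0$), this should deliver $(-1)^mD_q^mf\ge0$ for all $m$, i.e. that $f$ is $q$-completely monotonic.

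The hard part will be the genuinely $q$-specific difficulty that $D_q$ does not commute with the additive shift $f\mapsto f(\cdot+a)$: unlike the ordinary derivative, $D_q^n\big(f(x+a)\big)$ is not $g_n(x+a)$, and, relatedly, $q$-monotonicity ($D_qg_n\le0$) compares values only along the geometric net $x,qx,q^2x,\dots$ rather than along the additive net needed for $g_n(x)\ge g_n(x+a)$. The cleanest way I see to overcome this is to route the argument through the $q$-Laplace representation promised in the introduction: writing $f(x)=\int_0^\infty E_q^{-xt}\,d\mu(t)$ and using $E_q^{-(x+a)t}=E_q^{-xt}E_q^{-at}$, immediate from $E_q^{x}=(E_q(1))^{x}$, the difference becomes $f(x)-f(x+a)=\int_0^\infty E_q^{-xt}\big(1-E_q^{-at}\big)\,d\mu(t)$, a $q$-Laplace transform of the positive measure $(1-E_q^{-at})\,d\mu(t)$, so part (i) follows at once and part (ii) by the same cone-and-limit argument applied to the representing measures. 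I would also need to reconcile the ordinary derivative $f'$ that surfaces in the limit of part (ii) with the $q$-derivatives, using a Theorem 2.8-type passage from ordinary to $q$-monotonicity.
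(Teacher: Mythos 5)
Your proposal follows the same two-step skeleton as the paper: part (i) goes through the integral representation of $q$-completely monotonic functions from \cite[Theorem 1.2]{KK}, and part (ii) through the limit $\lim_{a\to0^+}a^{-1}\bigl(f(x)-f(x+a)\bigr)$ together with closure of the $q$-completely monotonic cone under positive scaling and pointwise limits. The interesting divergence is at exactly the point you call ``the hard part.'' The paper writes $f(x)=\int_0^\infty E_q(-xt)\,d_q(\mu(t))$ with the $q$-exponential kernel and then differentiates under the integral, obtaining
\begin{equation*}
(-1)^nD_q^n\bigl(f(x)-f(x+a)\bigr)=q^{\binom{n}{2}}\int_0^\infty\Bigl(E_q(-q^nxt)-E_q(-q^n(x+a)t)\Bigr)t^n\,d_q(\mu(t))\geq0,
\end{equation*}
i.e.\ it applies the rule $D_q^nE_q(cx)=c^nq^{\binom{n}{2}}E_q(cq^nx)$ to the shifted kernel $x\mapsto E_q(-(x+a)t)$ --- precisely the commutation of $D_q$ with the additive shift that you correctly identify as failing. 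So your diagnosis names a step the paper's own proof silently glosses over. Your factorization repair $E_q^{-(x+a)t}=E_q^{-xt}E_q^{-at}$ is sound for the kernel $E_q^{-xt}=(E_q(1))^{-xt}$, which is an honest exponential, and it is consistent with how the paper uses $E_q^{-tf}$ in Theorems 2.3 and 2.7; but note the representation theorem as quoted in the paper delivers the kernel $E_q(-xt)$, for which no such factorization holds ($E_q(u+v)\neq E_q(u)E_q(v)$). To make your route complete you must say which form of the representation you use and check that $x\mapsto E_q^{-xt}$ is itself $q$-completely monotonic, so that transforms of positive measures are $q$-completely monotonic.

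For part (ii) you are actually more careful than the paper, which identifies the limit with $-D_qf(x)$ (justified only by the pointwise fact $D_qh(0)=h'(0)$), whereas the limit is $-f'(x)$, as you say. But the step you hedge on --- passing from $(-1)^{n+1}D_q^nf'\geq0$ together with $f\geq0$ to $(-1)^mD_q^mf\geq0$ --- is a genuine gap: $D_q$ and $d/dx$ do not commute, so $q$-complete monotonicity of $-f'$ does not transfer to $f$ by relabeling indices, and the ``Theorem 2.8-type passage'' you invoke runs from \emph{ordinary} monotonicity to $q$-monotonicity, while your limit hands you $q$-complete monotonicity of $-f'$, not ordinary complete monotonicity, so that tool does not apply as stated. (Had the limit really been $-D_qf$, as the paper asserts, the conclusion would be immediate from $(-1)^nD_q^n(-D_qf)\geq0$ for $n\geq0$ plus $f\geq0$; this is how the paper closes, but on a false identification.) To finish honestly you would need an additional bridge, e.g.\ an induction in the spirit of the paper's Theorem 2.1 using (*) and mean-value arguments relating $D_q^mf$ along the geometric net to sign conditions on derivatives of $f$.
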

\begin{proof}
(i) Let $f(x)$ be a $q$-completely monotonic on $\mathbb{R}^+$ and let $a > 0$, by  \cite[Theorem 1.2]{KK} we have $f(x)=\int_{0}^{\infty}{E_q(-xt)d_q(\mu(t))}$, where $\mu(t)$ is a positive measure on $\mathbb{R}^+$. Hence,
\begin{align*}
&(-1)^nD^{n}_q(f(x)-f(x+a))\\
&=q^{\binom{n}{2}}\int_{0}^{\infty}{\Big(E_q(-q^nxt)-E_q(-q^n(x+a)t)\Big)t^nd_q(\mu(t))}\geq 0.
\end{align*}
(ii)  Let $f(x)\geq  0$ and let
$f(x)-f(x+a)$ be a $q$-completely monotonic on $\mathbb{R}^+$ for each a in some  right-hand
neighborhood of  $0$. Since $D_q(h(0))=h'(0)$, we have that $-(D_qf(x))=\lim_{a\to 0^+}{\frac{f(x)-f(x+a)}{a}}$ is a $q$-completely monotonic on $\mathbb{R}^+$. Then, we have $f(x)$ is a $q$-completely monotonic on $\mathbb{R}^+$.
\end{proof}

For our next step, we
define $\mathbb{R}_{q,+}=\{q^n\mid n\in\mathbb{Z}\}$ and
$$L_{q,\lambda}(\mathbb{R}_{q,+},\mu(t))=\left\{f\mid \int_0^\infty E_q(-\lambda f(t))d_q(\mu(t))<\infty\right\},$$
where $\mu(t)$ is any positive measure.
We consider the $q$-Wiener algebra
$$\mathcal{A}_{q,\lambda}=\{f\in L_{q,\lambda}(\mathbb{R}_q,+)\mid \mathcal{L}_{q,\lambda}(f)\in L_{q,\lambda}(\mathbb{R}_q,+)\}.$$
Let $\mathcal{M}_q^+$ be the set of positives and bounded measures on $\mathbb{R}_{q,+}$.
The $q$-convolution semigroup (see \cite{DG}) of probability measures is a family $(\pi_t)_{t>0}$ of probability measures in $\mathcal{M}_q^+$ such that
\begin{align}
\mathcal{L}_{q,\lambda}(\pi_t)\in\mathcal{A}_{q,\lambda},\quad \pi_t*_q\pi_s=\pi_{t+s},\quad\mbox{for }x,t>0.\label{eqLL1}
\end{align}
We define the $q$-convolution product of two measures $\nu ,\mu \in{\mathcal{M}_q^+}$ is given by
$$\int_{0}^{\infty}{f(t)(\mu*_q\nu)d_qt}=\int_{0}^{\infty}\int_{0}^{\infty}{f(t+s)\mu(d_qt)\nu(d_qs)}.$$

\begin{theorem}\label{th3}
Let $(\mu_t)_{t>0}$ be a $q$-convolution semigroup of probability measures on $\mathbb{R}_{q,+}$. Then there exists a  $q$-Bernstein function $f$ such that the $q$-Laplace transform of $\mu_t$ is given by
$\mathcal{L}_q\mu_t=E_q^{-tf}$,
for all $t\geq0$. Conversely, if $f$ is a $q$-Bernstein function, there exists a $q$-convolution semigroup of probability measures $(\mu_t)_{t>0}$ on $\mathbb{R}_{q,+}$ such that $\mathcal{L}_q\mu_t=E_q^{-tf}$,
for all $t\geq0$.
\end{theorem}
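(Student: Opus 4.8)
The plan is to run the classical Lévy–Khintchine correspondence through the two tools already in place: the $q$-Laplace integral representation of $q$-completely monotonic functions from \cite[Theorem 1.2]{KK}, and the theorem proved above stating that $f$ is a $q$-Bernstein function if and only if $E_q^{-tf}$ is $q$-completely monotonic for every $t>0$. The bridge between the two directions is the multiplicativity of the $q$-Laplace transform under $q$-convolution, namely
\begin{align}
\mathcal{L}_q(\mu*_q\nu)=\mathcal{L}_q(\mu)\cdot\mathcal{L}_q(\nu),\label{eqconvprod}
\end{align}
which I would establish first from the definition of $*_q$ by writing
\begin{align*}
\mathcal{L}_q(\mu*_q\nu)(\lambda)=\int_0^\infty E_q(-\lambda t)(\mu*_q\nu)(d_qt)=\int_0^\infty\!\!\int_0^\infty E_q(-\lambda(t+s))\,\mu(d_qt)\,\nu(d_qs)
\end{align*}
and factoring the integrand.

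\emph{Forward direction.} Given a $q$-convolution semigroup $(\mu_t)_{t>0}$, I would apply \eqref{eqconvprod} to the relation $\mu_{t+s}=\mu_t*_q\mu_s$ to obtain $\mathcal{L}_q\mu_{t+s}=\mathcal{L}_q\mu_t\cdot\mathcal{L}_q\mu_s$. Since each $\mu_t$ is a probability measure, $\mathcal{L}_q\mu_t$ is at most $1$ and equals $1$ at the origin, so for each fixed argument the map $t\mapsto\mathcal{L}_q\mu_t$ is a nonnegative multiplicative function of $t$. Using the regularity built into \eqref{eqLL1}, that is $\mathcal{L}_{q,\lambda}(\pi_t)\in\mathcal{A}_{q,\lambda}$, this solves the Cauchy functional equation and hence has the form $\mathcal{L}_q\mu_t=E_q^{-tf}$ with $f\geq0$ recovered by the same limit used earlier, $f=\lim_{t\to0^+}\frac{1-E_q^{-tf}}{t\log E_q}$. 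Finally, because $E_q^{-tf}=\mathcal{L}_q\mu_t$ is the $q$-Laplace transform of the positive measure $\mu_t$, it is $q$-completely monotonic, and the characterization above then forces $f$ to be a $q$-Bernstein function.

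\emph{Converse direction.} Starting from a $q$-Bernstein function $f$, the characterization above gives that $E_q^{-tf}$ is $q$-completely monotonic for every $t>0$, so \cite[Theorem 1.2]{KK} supplies, for each $t$, a unique positive measure $\mu_t$ on $\mathbb{R}_{q,+}$ with $E_q^{-tf}=\mathcal{L}_q\mu_t$. Evaluating at the origin and using $E_q(0)=1$ gives total mass $\mu_t(\mathbb{R}_{q,+})=E_q^{-tf(0)}$, which equals $1$ after normalizing $f(0)=0$, so each $\mu_t$ is a probability measure. The semigroup law is then verified through \eqref{eqconvprod}, since $\mathcal{L}_q(\mu_t*_q\mu_s)=E_q^{-tf}E_q^{-sf}=E_q^{-(t+s)f}=\mathcal{L}_q\mu_{t+s}$, and the uniqueness clause of \cite[Theorem 1.2]{KK} yields $\mu_t*_q\mu_s=\mu_{t+s}$; the integrability requirement $\mathcal{L}_{q,\lambda}(\pi_t)\in\mathcal{A}_{q,\lambda}$ follows from boundedness of the transforms.

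The delicate point is \eqref{eqconvprod}: factoring the integrand requires an addition law of the form $E_q(-\lambda(t+s))=E_q(-\lambda t)\,E_q(-\lambda s)$, which the genuine $q$-exponential does not satisfy on the nose. I would therefore either restrict the pairing to the lattice $\mathbb{R}_{q,+}=\{q^n\}$ on which the $q$-convolution is defined, or pass to the base-constant exponential $E_q^{x}=(E_q(1))^{x}$, for which multiplicativity is automatic; this is precisely why the statement is phrased with $E_q^{-tf}$ rather than $E_q(-tf)$. A secondary technical point is securing enough regularity in $t$ to solve the Cauchy equation in the forward direction, for which the membership $\mathcal{L}_{q,\lambda}(\pi_t)\in\mathcal{A}_{q,\lambda}$ in \eqref{eqLL1} is the hypothesis I would lean on.
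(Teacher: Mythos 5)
Your proof follows essentially the same route as the paper's: the forward direction turns the semigroup law $\mu_t*_q\mu_s=\mu_{t+s}$ into Cauchy's functional equation for $f_t(\lambda)=-Log_q\mathcal{L}_{q,\lambda}(\mu_t)$, solves it to get $f_t=tf$, and invokes the earlier characterization (Theorem 2.3) to conclude $f$ is $q$-Bernstein, while the converse combines that same characterization with the representation theorem of \cite[Theorem 1.2]{KK} to produce the measures $\mu_t$. The only differences are points of added care rather than of method: you explicitly establish the multiplicativity of $\mathcal{L}_q$ under $*_q$ (correctly flagging that $E_q(-\lambda(t+s))$ does not factor for the genuine $q$-exponential, a subtlety the paper passes over silently when it asserts $f_{t+s}=f_t+f_s$), you normalize $f(0)=0$ to see each $\mu_t$ is a probability measure, and you verify the semigroup law in the converse direction, which the paper dismisses with a parenthetical remark.
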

\begin{proof}
Suppose that  $(\mu_t)_{t>0}$ be a $q$-convolution semigroup of probability measures on $\mathbb{R}_{q,+}$. Set $t\geq0$. Since $\mathcal{L}_q\mu_t>0$, we define a function $f_{t}:[0,+\infty)\rightarrow\mathbb{R}$ by $f_{t}(\lambda)=-Loq_q\mathcal{L}_{q,\lambda}(\mu_t)$. By \eqref{eqLL1} we have $f_{t+s}(\lambda)=f_{t}(\lambda)+f_{s}(\lambda)$, for all $t,s\geq0$, that is, $t\rightarrow f_{t}(\lambda)$ satisfies the Cauchy's functional equation.  By the continuous  of $f_{t}$, we obtain that there is a unique solution $f_{t}(\lambda)=t\cdot f(\lambda)$, where $f(\lambda)=f_{1}(\lambda)$. Therefore,
$Log_q(\mathcal{L}_{q,\lambda}(\mu_t))=-f_{t}(\lambda)$, which implies that $\mathcal{L}_{q,\lambda}(\mu_t)=E_q^{-t\cdot f(\lambda)}$, in particular, $E_q^{-tf}$ is a $q$-completely monotonic function for all $t>0$. By Theorem 2.3, $f$ is $q$-Bernstein function.

Conversely, suppose that $f$ is a $q$-Bernstein function, by Theorem 2.3, we have $E_q^{-tf}$ is a  $q$-completely monotonic function. Therefore, for every $t\geq0$ there exists a measure $\mu_t$ on $\mathbb{R}_{q,+}$ such that $\mathcal{L}_q\mu_t=E_q^{-tf}$. (Note that by definition this family is a $q$-convolution semigroup of probability measures $(\mu_t)_{t>0}$ on $\mathbb{R}_{q,+}$.)
\end{proof}

In order to state our next results, we need the following definition.
A $q$-completely monotonic function $f$ is said to be {\it $q$-infinitely divisible} if for every $t>0$ the function $f^t$ is again a $q$-completely monotonic function.

\begin{theorem}\label{th2.5}
Let $g:\mathbb{R}^+\rightarrow \mathbb{R}^+$ be any function. Then, $g$ is a $q$-infinitely divisible  if and only if $g=E_q^{-f}$ where $f$ is a $q$-Bernstein function.
\end{theorem}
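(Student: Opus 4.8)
The plan is to reduce everything to Theorem 2.3, which already characterizes $q$-Bernstein functions through the $q$-complete monotonicity of $E_q^{-tf}$. The single bridging observation I need is the exponent identity $(E_q^{-f})^{t}=E_q^{-tf}$ for $t>0$, which is immediate from the definition $E_q^{x}=(E_q(1))^{x}$ together with the bound $E_q(1)\in[2,e]$ established in the proof of Theorem~\ref{thff1}; since $E_q(1)\geq 2>1$ this also guarantees that the logarithm $Log_q=\log_{E_q}$ is well defined and increasing on $\mathbb{R}^+$. With this identity in hand I can pass freely between powers of $g$ and rescalings of the exponent $f$.

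For the sufficiency direction ($\Leftarrow$) I would assume $f$ is a $q$-Bernstein function and set $g=E_q^{-f}$. For each $t>0$ the function $tf$ is again $q$-Bernstein, because the defining inequalities $(-1)^{n-1}D_q^n(tf)=t\,(-1)^{n-1}D_q^nf\geq0$ are preserved under multiplication by the positive constant $t$. Theorem~2.3 then gives that $E_q^{-tf}$ is $q$-completely monotonic, and by the exponent identity $E_q^{-tf}=g^{t}$. Taking $t=1$ shows that $g$ itself is $q$-completely monotonic, and letting $t$ range over all positive reals shows that every power $g^{t}$ is $q$-completely monotonic; by definition $g$ is therefore $q$-infinitely divisible.

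For the necessity direction ($\Rightarrow$) I would assume $g$ is $q$-infinitely divisible. Since $g$ is positive and $E_q(1)>1$, the function $f:=-Log_q(g)$ is well defined, and by construction $g=E_q^{-f}$. The infinite-divisibility hypothesis says exactly that $g^{t}$ is $q$-completely monotonic for every $t>0$, and the exponent identity rewrites this as: $E_q^{-tf}$ is $q$-completely monotonic for every $t>0$. Applying the converse half of Theorem~2.3 yields that $f$ is a $q$-Bernstein function, which completes the characterization.

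I expect no serious obstacle here: the mathematical content is carried entirely by Theorem~2.3, and the only points requiring care are the legitimacy of the exponent law and of the logarithm $Log_q$, both of which hinge on $E_q(1)$ being a fixed real number exceeding $1$. The one step worth stating explicitly is that the equivalence between ``$g^{t}$ is $q$-completely monotonic for all $t>0$'' and ``$E_q^{-tf}$ is $q$-completely monotonic for all $t>0$'' is a genuine equality of families of functions, so that Theorem~2.3 can be invoked verbatim in each direction rather than through any limiting argument.
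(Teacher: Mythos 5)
Your proof is correct at the paper's level of rigor, and your sufficiency direction coincides with the paper's (both just apply Theorem 2.3 together with the exponent law $(E_q^{-f})^t=E_q^{-tf}$, which is trivial because $E_q^x$ is by definition an ordinary power of the fixed base $E_q(1)\in[2,e]$). The necessity direction, however, is where you genuinely diverge. The paper does not define $f=-Log_q(g)$ directly: instead it represents each power $g^t$ as a $q$-Laplace transform via the representation theorem \cite[Theorem 1.4]{KK}, and then invokes Theorem \ref{th3} to extract a $q$-Bernstein function $f$ with $\mathcal{L}_q\mu_t=E_q^{-tf}$, concluding $g^t=E_q^{-tf}$ and hence $g=E_q^{-f}$. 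Your route is shorter and avoids all measure theory: you only need the converse half of Theorem 2.3, applied to the family $g^t=E_q^{-tf}$ with $f:=-Log_q(g)$. This is arguably tighter, since the paper's detour silently requires the family $(\mu_t)_{t>0}$ to be a $q$-convolution semigroup of \emph{probability} measures before Theorem \ref{th3} applies --- a hypothesis that would need injectivity of $\mathcal{L}_q$ (to get $\mu_t*_q\mu_s=\mu_{t+s}$ from $g^tg^s=g^{t+s}$) plus a normalization, none of which the paper verifies. What the paper's route buys in exchange is the probabilistic content: it exhibits $g$ explicitly as the Laplace transform of a convolution semigroup, linking Theorem \ref{th2.5} to Theorem \ref{th3}. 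One caveat applies equally to both arguments and you should state it: Theorem 2.3 is formulated for $f:\mathbb{R}^+\to\mathbb{R}^+$ (and its proof uses nonnegativity of $1-E_q^{-tf}$), so your $f=-Log_q(g)$ must be shown nonnegative, i.e.\ $g\leq1$; this does not follow from $q$-infinite divisibility alone (consider a positive constant multiple of a $q$-completely monotonic function) and is an implicit normalization the paper also sweeps under the rug via the word ``probability'' in Theorem \ref{th3}.
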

\begin{proof}
Suppose that $g$ is a $q$-infinitely divisible. Since $g^t$ is a $q$-completely monotonic function,  we obtain, by \cite[Theorem 1.4]{KK},  that there exists a measure $\mu_t$ on $[0,+\infty)$ such that $g^t(\lambda)=\mathcal{L}_q(\mu_t,\lambda)$. By Theorem \ref{th3}, there exists $q$-Bernstein function $f$ such that $\mathcal{L}_q(\mu_t,\lambda)=E_q^{-tf(\lambda)}$. Hence,
$g^t=E_q^{-tf}$, which implies that $g=E_q^{-f}$.

Now, suppose that $g=E_q^{-f}$ where $f$ is a $q$-Bernstein function. Then, by Theorem 2.3, we have, $g^{t} =E_q^{-tf}$ is a $q$-completely monotonic function, which completes the proof.
\end{proof}

\begin{theorem}
Let $g:\mathbb{R}^+\rightarrow\mathbb{R}^+$ be any function. Then, $g$ is a $q$-infinitely divisible if and only if $g$ is a $q$-log completely monotonic.
\end{theorem}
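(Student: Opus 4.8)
The plan is to obtain this equivalence almost immediately from the characterization of $q$-infinitely divisible functions already established in Theorem~\ref{th2.5}, which asserts that $g$ is $q$-infinitely divisible precisely when $g=E_q^{-f}$ for some $q$-Bernstein function $f$. The entire argument then reduces to translating the $q$-Bernstein condition on $f$ into the $q$-log-completely monotonic condition on $g$, and the bridge between the two is the elementary identity $Log_q(E_q^{-f})=-f$, which follows from $E_q^x=(E_q(1))^x$ together with $Log_q=\log_{E_q}$.

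First I would record the sign computation that does all the work. Writing $Log_q(g)=-f$, one has, for every $n\geq1$,
\[
(-1)^nD_q^n\,Log_q(g)=(-1)^nD_q^n(-f)=(-1)^{n-1}D_q^n f,
\]
so the inequalities $(-1)^nD_q^n\,Log_q(g)\geq0$ (the definition of $q$-log-complete monotonicity) and $(-1)^{n-1}D_q^n f\geq0$ (the definition of the $q$-Bernstein property) are literally the same statement.

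For the forward implication I would start from a $q$-infinitely divisible $g$, invoke Theorem~\ref{th2.5} to write $g=E_q^{-f}$ with $f$ a $q$-Bernstein function, and then apply the displayed identity with $Log_q(g)=-f$ to conclude that $(-1)^nD_q^n\,Log_q(g)=(-1)^{n-1}D_q^nf\geq0$ for all $n\geq1$; hence $g$ is $q$-log-completely monotonic. For the converse I would begin with a $q$-log-completely monotonic $g$, set $f:=-Log_q(g)$ so that $g=E_q^{-f}$, and use the same identity in the opposite direction to see that $(-1)^{n-1}D_q^nf=(-1)^nD_q^n\,Log_q(g)\geq0$, i.e.\ $f$ is a $q$-Bernstein function; Theorem~\ref{th2.5} then yields that $g$ is $q$-infinitely divisible. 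I would also remark that the consistency of the two notions is guaranteed by Theorem~\ref{th001}, since a $q$-log-completely monotonic function is automatically $q$-completely monotonic, which is the standing hypothesis built into the definition of $q$-infinite divisibility.

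The step I expect to require the most care is the identity $Log_q(E_q^{-f})=-f$ and, relatedly, the well-definedness of the two maps $g\mapsto -Log_q(g)$ and $f\mapsto E_q^{-f}$ as mutually inverse passages between positive functions on $\mathbb{R}^+$; everything else is one-line sign bookkeeping. In particular I would want to confirm that $E_q^{-f}$ remains strictly positive, so that $Log_q$ is applicable, and that $g>0$ on $\mathbb{R}^+$ is part of the hypothesis, so that $f=-Log_q(g)$ is a genuinely well-defined real-valued function to which the $q$-Bernstein criterion can be applied.
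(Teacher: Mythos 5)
Your proposal is correct and takes essentially the same route as the paper: both directions come down to Theorem~\ref{th2.5} combined with the sign identity $(-1)^nD_q^n\,Log_q(g)=(-1)^{n-1}D_q^nf$ for $g=E_q^{-f}$. If anything, your converse is stated more carefully than the paper's own, which jumps to ``$g^t=E_q^{-tf}$ is $q$-completely monotonic'' without explicitly noting that $f=-Log_q(g)$ is $q$-Bernstein --- precisely the step you make explicit.
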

\begin{proof}
Suppose that $g$ is a $q$-infinitely divisible. So $g^t$ is a $q$-completely monotonic. By Theorem \ref{th2.5}, there exits a $q$-Bernstein function $f$ such that $g=E_q^{-f}$, which implies that $Log_q(g)=-f$, in other words
$$(-1)^nD_q^n(Log_q g)=(-1)^{n-1}D_q^n f>0.$$

Conversely, if $g$ is a $q$-log completely monotonic function, then the function $g$ is a $q$-completely monotonic. By Theorem 2.8, there exits a $q$-Bernstein function $f$ such that $$g^t=E_q^{-t\cdot f}$$ is a $q$-completely monotonic function, for all $t>0$, as required.
\end{proof}

\section{Applications}
In this section we present applications for our results. In order to do that, we recall $q$-analogue of several known functions.
Jackson (for example, see \cite{K1,K2,K3,A,E0}) defined the $q$-analogue of the
gamma function as
\begin{equation}\label{eqgamma1}
\Gamma_q(x)=\frac{(q;q)_\infty}{(q^x;q)_\infty}(1-q)^{1-x},\,
0<q<1,\end{equation}
and
\begin{equation}\label{eqgamma2}
\Gamma_q(x)=\frac{(q^{-1};q^{-1})_\infty}{(q^{-x};q^{-1})_\infty}(q-1)^{1-x}q^{x\choose2},\,q>1,
\end{equation}
where $(a;q)_\infty=\prod_{j\geq0}(1-aq^j)$.

The $q$-analogue of the psi function is defined for $0<q<1$ as the
logarithmic derivative of the $q$-gamma function, namely,
$\psi_q(x)=\frac{d}{dx} \log \Gamma_q(x).$
Askey \cite{A} considered several properties of the $q$-gamma function. It is well known that $\Gamma_q(x)\rightarrow\Gamma(x)$
and $\psi_q(x)\rightarrow\psi(x)$ as $q\rightarrow1^{-}$. For $0<q<1$ and $x>0$, by
\eqref{eqgamma1} we obtain that
\begin{equation}\label{eqpsiq1}
\psi_q(x)=-\log(1-q)+\log(q)\sum_{n\geq0}\frac{q^{n+x}}{1-q^{n+x}}=-\log(1-q)+\log(q)\sum_{n\geq1}\frac{q^{nx}}{1-q^n}
\end{equation}
and for $q>1$ and $x>0$, by \eqref{eqgamma2} we have that
\begin{equation}\label{eqpsq2}
\begin{array}{ll}
\psi_q(x)&=-\log(q-1)+\log(q)\left(x-\frac{1}{2}-\sum\limits_{n\geq0}\frac{q^{-n-x}}{1-q^{-n-x}}\right)\\
&=-\log(q-1)+\log(q)\left(x-\frac{1}{2}-\sum\limits_{n\geq1}\frac{q^{-nx}}{1-q^{-n}}\right).
\end{array}
\end{equation}
We set $\psi_1=\psi$. A Stieltjes integral representation for
$\psi_q(x)$ with $0<q<1$ is given in \cite{IM}. It is well-known
that $\psi'$ is strictly completely monotonic function on $(0,\infty)$ (see \cite[Page 260]{AS}). From \eqref{eqpsiq1} and \eqref{eqpsq2}
we conclude that $\psi_q'$ has the same property for any $q > 0$, namely,
$(-1)^n(\psi_q'(x))^{(n)}>0$, for $x>0$ and $n\geq0$.
If $q\in (0,1)$, then by \eqref{eqpsiq1}, we have that
\begin{equation}\label{in07}
\psi_q^{(k)}(x)=\log^{k+1}q\sum\limits_{n\geq1}\frac{n^k\cdot
q^{nx}}{1-q^n}.
\end{equation}
If $q>1$, then by \eqref{eqpsq2} we obtain that
\begin{equation}\label{eqpsq03}
\psi'_q=\log q\Big(1+n\log q\sum\limits_{n\geq
1}\frac{q^{-nx}}{1-q^{-nx}}\Big),
\end{equation}
and for $k\geq 2$
\begin{equation}\label{eqpsq3}
\psi^{(k)}_q=(-1)^k n^k \log^{k+1} q\sum\limits_{n\geq
1}\frac{q^{-nx}}{1-q^{-nx}}.
\end{equation}
The {\em polylogarithm} is a special function that is defined by the infinite sum, or power series:
$${Li}_s(z) = \sum_{k=1}^\infty {z^k \over k^s} = z + {z^2 \over 2^s} + {z^3 \over 3^s} + \cdots.$$

Let $\alpha \in R$ and $\beta \geq 0$ be real numbers, we define
\begin{equation}\label{ek1}
f_{\alpha, \beta, q}(x)=(1-q)^{x}\frac{e^{ h(x)}\Gamma_q(x+\beta)}{[x]^{x+\beta-\alpha}},
\end{equation}
where $h(x)= -\frac{Li_2(q^x)+x\log(q)\log(1-q^x)}{\log (q)}$.

Now we ready to present applications of the previous results.

\begin{theorem}
Let $2\alpha \leq 1\leq \beta$ and $0<q<1$. Then the function $ f_{\alpha, \beta, q}(x)$ is a $q$-log-completely monotonic function on
$(0,\infty)$.
\end{theorem}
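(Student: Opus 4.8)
The plan is to deduce the statement from Theorem \ref{thff1}: since that theorem shows every (ordinary) log-completely monotonic function is $q$-log-completely monotonic, it suffices to prove that $f_{\alpha,\beta,q}$ is log-completely monotonic in the classical sense on $(0,\infty)$. Because $(-1)^n(\log f)^{(n)}\ge0$ for all $n\ge1$ is equivalent to complete monotonicity of $g:=-(\log f_{\alpha,\beta,q})'$, the whole problem reduces to showing that $g$ is completely monotonic on $(0,\infty)$.

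First I would put $\log f_{\alpha,\beta,q}$ in closed form. Writing $\log[x]=\log(1-q^x)-\log(1-q)$ and $h(x)=-Li_2(q^x)/\log q-x\log(1-q^x)$, the $\log(1-q^x)$ contributions combine and one obtains
\[
\log f_{\alpha,\beta,q}(x)=(2x+\beta-\alpha)\bigl(\log(1-q)-\log(1-q^x)\bigr)-\frac{Li_2(q^x)}{\log q}+\log\Gamma_q(x+\beta).
\]
Differentiating, using the dilogarithm identity $\tfrac{d}{dx}Li_2(q^x)=-\log q\,\log(1-q^x)$ (i.e. $\sum_{k\ge1}z^k/k=-\log(1-z)$) together with the expansion \eqref{eqpsiq1} of $\psi_q$, I arrive at the explicit Dirichlet-type series
\[
g(x)=\sum_{k\ge1}\frac{q^k-q^{kx}}{k}-\log q\sum_{k\ge1}\Bigl(\frac{q^{k\beta}}{1-q^k}+2x+\beta-\alpha\Bigr)q^{kx},
\]
where $q^{kx}=e^{-k|\log q|x}$. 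Repeated term-by-term differentiation then gives, for every $n\ge1$,
\[
(-1)^n(\log f_{\alpha,\beta,q})^{(n)}(x)=\sum_{k\ge1}|\log q|^{\,n-1}k^{\,n-2}\Bigl[k|\log q|\Bigl(\tfrac{q^{k\beta}}{1-q^k}+2x+\beta-\alpha\Bigr)-(2n-1)\Bigr]q^{kx},
\]
so the task becomes showing this series is nonnegative for all $n\ge1$ and $x>0$.

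The two hypotheses enter at exactly this point. The block $|\log q|\sum_k\frac{q^{k\beta}}{1-q^k}q^{kx}$ (which equals $-\psi_q(x+\beta)-\log(1-q)$, cf.\ \eqref{eqpsiq1}, \eqref{in07}) is a positive combination of the completely monotonic exponentials $e^{-k|\log q|x}$ and is therefore completely monotonic, with $\beta\ge1$ keeping its coefficients controlled. Likewise $2\alpha\le1\le\beta$ forces $\beta-\alpha\ge\tfrac12>0$, so the block $|\log q|(\beta-\alpha)\sum_k q^{kx}$ is completely monotonic; moreover it produces the most singular term $\sim(\beta-\alpha)(n-1)!\,x^{-n}$ of the $n$-th derivative as $x\to0^+$, which secures positivity near the origin.

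The main obstacle is the contribution created by the exponent $x+\beta-\alpha$ of $[x]$ together with $h$, namely the summands $x\,q^{kx}$ (equivalently the block $-2x\log q\sum_k q^{kx}$) and the logarithmic part $\sum_k(q^k-q^{kx})/k=\log[x]$. Neither $x\,q^{kx}$ nor $\log[x]$ is completely monotonic on its own — indeed $\log[x]+2|\log q|\,x\,q^x/(1-q^x)$ is negative near $0$ — so the crux is to show that their combination with the positive $\psi_q$- and constant-weight blocks above is completely monotonic. I expect the cleanest way to close this is to exhibit a genuine nonnegative measure $\mu$ with $g(x)=\int_0^\infty e^{-xt}\,d\mu(t)$: the atomic data are the positive masses $-\log(1-q)$ at $t=0$ and $|\log q|\bigl(q^{k\beta}/(1-q^k)+\beta-\alpha\bigr)$ at $t=k|\log q|$, set against the negative masses $1/k$ coming from $\log[x]$, while the $x\,q^{kx}$ terms (via a Binet/Mittag-Leffler expansion of $x/(e^{|\log q|x}-1)$) smear these atoms into a continuous density. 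The content of the theorem is precisely that, under $2\alpha\le1\le\beta$, this signed combination is in fact a positive measure, or equivalently that the displayed series is nonnegative for every $n$; verifying that domination uniformly in $n$ and $x$ is the step I expect to require the most care.
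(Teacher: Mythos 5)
You take the same first step as the paper --- reduce to classical log-complete monotonicity and transfer to the $q$-setting via Theorem \ref{thff1} --- and your closed form for $\log f_{\alpha,\beta,q}$, the dilogarithm identity, and the resulting series for $(-1)^n(\log f_{\alpha,\beta,q})^{(n)}$ all check out. But the proposal stops exactly where the proof has to begin: nonnegativity of that signed series for every $n\ge1$ \emph{is} the assertion of the theorem, and you supply no mechanism for it, only the expectation that a ``domination uniform in $n$ and $x$'' can be verified. That is a genuine gap, and your own display shows it is not routine: for fixed $k$ the bracket $k|\log q|(\cdots)-(2n-1)$ is eventually negative as $n$ grows, so termwise positivity fails and a different cross-$k$ cancellation is needed for each $n$; no single domination estimate of the kind you anticipate can exist, and none is sketched.

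The idea your plan is missing is the paper's device for avoiding any $n$-dependent estimate altogether. Via the representation \eqref{ekuacioni} (Lemma 2.3 of \cite{KM}), the three blocks --- $\psi_q^{(n-1)}(x+\beta)$, the derivatives of $-\ln[x]$, and the $(\beta-\alpha)$-block --- are written as Laplace-type integrals against the \emph{same} positive measure $\gamma_q$; merging them under one integral sign and factoring out the positive kernel $t^{n-2}e^{-xt}/(1-e^{-t})$ leaves, in \eqref{ekkk1}, the $n$- and $x$-independent function $g_{\alpha,\beta}(t)=t+[(\beta-\alpha)t-1][e^{\beta t}-e^{(\beta-1)t}]$, whose positivity for $2\alpha\le1\le\beta$ is a known scalar inequality \cite{Ch}. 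One inequality in the single variable $t$ then settles all $n\ge2$ simultaneously --- precisely the uniformity you could not produce. Two further points. First, this factorization is possible only because the $x$-dependent block is absent from the paper's ledger: the factor $e^{h(x)}$ in \eqref{ek1} exists precisely so that $h'$ cancels the term $xq^x\log q/(1-q^x)$ coming from $[x]^{-x}$ (with the printed sign of $h$ the cancellation in fact fails --- your faithful computation correctly retains a $2x$ block, pointing to a sign slip in the paper's derivative line --- but the rest of the proof is built on the cancelled form); it is that surviving $2x\,q^{kx}$ block that injects $x$ and $n$ into your bracket and makes your route intractable as stated. Second, the kernel carries $t^{n-2}$, so the integral argument only covers $n\ge2$; the paper handles $n=1$ by a separate computation, showing $(\ln f_{\alpha,\beta,q})'$ is increasing with limit $\log(1-q)<0$ at $+\infty$ --- a case your sketch (the atom $-\log(1-q)$ at $t=0$) gestures at but does not carry out.
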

\begin{proof}
It is clear that
$$\ln f_{\alpha, \beta, q}(x)=x\log(1-q)+h(x)+\ln \Gamma _q(x+\beta)-(x+\beta-\alpha )\ln [x],$$
which implies
\begin{align*}
&[\ln f_{\alpha, \beta, q}(x)]^{'}\notag\\
&=\log(1-q)+h'(x)+\psi _q(x+\beta)-\ln[x]
-\frac{q^{x}(\beta-\alpha)\log (q)}{1-q^x}-\frac{xq^{x}\log (q)}{1-q^x}.
\end{align*}
Since $h'(x)=\frac{xq^{x}\log (q)}{1-q^x}$, we have that
\begin{align*}
[\ln f_{\alpha, \beta, q}(x)]^{'}&=\log(1-q)+\psi _q(x+\beta)-\ln[x]-\frac{q^{x}(\beta-\alpha)\log (q)}{1-q^x}.
\end{align*}
On the other hand, Lemma 2.3 (see \cite{KM}) gives
\begin{equation}\label{ekuacioni}
\Big(\frac{-q^{x}\log q}{1-q^{x}}\Big)^{(n)}=(-1)^n\int\limits_{0}^{\infty}t^{n}e^{-xt}d\gamma_q(t), \qquad n\geq0,
\end{equation}
 Hence,
\begin{align}
&(-1)^{n}[\ln f_{\alpha, \beta, p}(x)]^{(n)}\notag\\
&=(-1)^n\Big[\psi ^{(n-1)}_q(x+\beta)-\Big(\frac{-q^{x}\log q}{1-q^{x}}\Big)^{(n-1)}+(
\beta-\alpha)\Big(\frac{-q^{x}\log q}{1-q^{x}}\Big)^{(n)}\Big]\notag\\
&=\int\limits_{0}^{\infty}\frac{t^{n-1}e^{-(x+\beta)t}}{1-e^{-t}}d\gamma_q(t)-\int\limits_{0}^{\infty}t^{n-2}e^{-xt}d\gamma_q(t)
+(\beta-\alpha)\int\limits_{0}^{\infty}t^{n-1}e^{-xt}d\gamma_q(t)\notag\\
&=\int\limits_{0}^{\infty}g_{\alpha, \beta}(t)\frac{t^{n-2}e^{-xt}}{1-e^{-t}}d\gamma_q(t),\label{ekkk1}
\end{align}
where
$g_{\alpha, \beta}(t)=t+[(\beta-\alpha)t-1][e^{\beta t}-e^{(\beta -1)t}]$.
Note that $g_{\alpha, \beta}(t)>0$ (see \cite{Ch}). So, by (\ref{ekkk1}), we see that when $n\geq 2$
$$(-1)^{n}[\ln f_{\alpha, \beta, p}(x)]^{(n)}>0$$
on $(0,\infty)$ for $2\alpha \leq 1\leq \beta$. Thus, by Theorem \ref{thff1} we have $$(-1)^{n}D^{n}_q(Log_q f_{\alpha, \beta, p}(x))>0$$ on $(0,\infty)$ for $2\alpha \leq 1\leq \beta$ and $n\geq2$.

For $n=1$, since $[\ln f_{\alpha, \beta, q}(x)]^{'}$ is an increasing function, we have that
\begin{align*}
&[\ln f_{\alpha, \beta, q}(x)]^{'}\\
&<\lim\limits_{x\to\infty}\Big[\log(1-q)+\psi _q(x+\beta)-\ln[x] -\frac{q^{x}(\beta-\alpha)\log (q)}{1-q^x}\Big]=\log(1-q)<0,
\end{align*}
which implies that $D_q(Log_q f_{\alpha, \beta, p}(x))<0$.

Hence, for $ 2\alpha \leq 1\leq\beta $ and $n\in \mathbb{N}$,
$(-1)^{n}D^{n}_q(Log_q (f_{\alpha, \beta, p}(x)))>0$ in $(0,\infty)$, as required.
\end{proof}

\begin{theorem}
\label{th2} Let $a_i, b_i\in \mathbb{R}$ such that
$0<a_1\leqq \cdots \leqq a_n$, $0<b_1\leqq b_2\leqq \cdots \leqq b_n$
and $\sum_{i=1}^{k}a_i\leqq\sum_{i=1}^{k}b_i$ for all $k=1,2,\ldots,n$.
Then the function $G_{p,q}$
\begin{equation}  \label{eqGAO}
G_{p,q}(x)=G_{q}(x;a_1,b_1,\cdots,a_n,b_n)=\prod_{i=1}^{n}\frac{
\Gamma_{q} (x+a_i)}{\Gamma_{q} (x+b_i)}\qquad  (0<q<1)
\end{equation}
is a $q$-completely monotonic on $(0,\infty).$
\end{theorem}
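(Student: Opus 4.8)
The plan is to establish that $G_{p,q}$ is log-completely monotonic in the classical sense on $(0,\infty)$ and then transfer this to $q$-complete monotonicity using the theorems already proved. Since $G_{p,q}(x)>0$ for $x>0$, it suffices to show that $(-1)^m(\log G_{p,q})^{(m)}(x)\ge0$ for every $m\ge1$ and $x>0$: by Theorem \ref{thff1} this classical log-complete monotonicity upgrades to $q$-log-complete monotonicity, and by Theorem \ref{th001} the latter yields that $G_{p,q}$ is $q$-completely monotonic. Thus the entire difficulty is shifted onto a classical differential inequality, which I would attack by first linearizing.

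Taking logarithms gives $\log G_{p,q}(x)=\sum_{i=1}^n\big(\log\Gamma_q(x+a_i)-\log\Gamma_q(x+b_i)\big)$, so that for $m\ge1$ one has $(\log G_{p,q})^{(m)}(x)=\sum_{i=1}^n\big(\psi_q^{(m-1)}(x+a_i)-\psi_q^{(m-1)}(x+b_i)\big)$, where the additive constant of $\psi_q$ cancels in each difference. Substituting the series representation \eqref{in07} and factoring out the common terms then yields
$$(-1)^m(\log G_{p,q})^{(m)}(x)=|\log q|^m\sum_{j\ge1}\frac{j^{m-1}q^{jx}}{1-q^j}\,c_j,\qquad c_j=\sum_{i=1}^n\big(q^{ja_i}-q^{jb_i}\big),$$
using $(-1)^m(\log q)^m=(-\log q)^m=|\log q|^m>0$ for $0<q<1$. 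Because $|\log q|^m>0$ and $j^{m-1}q^{jx}/(1-q^j)>0$ for $0<q<1$, the right-hand side is nonnegative for all $m\ge1$ exactly when $c_j\ge0$ for every $j\ge1$, so the proof reduces to this single positivity.

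The positivity $c_j\ge0$, equivalently $\sum_i e^{-a_it}\ge\sum_i e^{-b_it}$ with $t=-j\log q>0$, is the heart of the argument and the step I expect to be the main obstacle, since it is where the majorization hypothesis $\sum_{i=1}^k a_i\le\sum_{i=1}^k b_i$ enters. I would prove it by combining convexity with Abel summation. Writing $g(s)=e^{-st}$, which is convex and decreasing, the tangent-line bound $g(b_i)-g(a_i)\le g'(b_i)(b_i-a_i)$ reduces the claim to $\sum_i g'(b_i)(b_i-a_i)\le0$. Setting $w_i=g'(b_i)$, which is nonpositive and, since $g'$ is increasing and $b_1\le\cdots\le b_n$, nondecreasing in $i$, and $\Delta_k=\sum_{i=1}^k(b_i-a_i)=B_k-A_k\ge0$, summation by parts gives $\sum_i w_i(b_i-a_i)=\Delta_n w_n+\sum_{i=1}^{n-1}\Delta_i(w_i-w_{i+1})\le0$, because every summand is a product of a nonnegative and a nonpositive factor. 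Hence $c_j\ge0$ for all $j$.

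Putting the two displays together shows $(-1)^m(\log G_{p,q})^{(m)}(x)\ge0$ for all $m\ge1$ and $x\in(0,\infty)$, so $G_{p,q}$ is log-completely monotonic in the classical sense. Applying Theorem \ref{thff1} makes it $q$-log-completely monotonic, and Theorem \ref{th001} then gives that $G_{p,q}$ is $q$-completely monotonic on $(0,\infty)$, as required. The only nonroutine points are the sign bookkeeping in the series representation and the convexity-plus-Abel argument for $c_j$; the term-by-term differentiation of the $\psi_q$-series is justified by its rapid convergence for $0<q<1$ and $x>0$, and the remaining steps are direct invocations of the earlier results.
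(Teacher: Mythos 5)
Your proof is correct, and it reaches the same analytic core as the paper by a noticeably different path, so a comparison is in order. Both arguments reduce the theorem to the positivity of $\sum_{i=1}^{n}\bigl(e^{-a_i t}-e^{-b_i t}\bigr)$ for $t>0$: the paper gets there by differentiating $h(x)=\sum_i\bigl(\log\Gamma_q(x+b_i)-\log\Gamma_q(x+a_i)\bigr)$ and using the Stieltjes-type integral representation \eqref{ekuacioni} for $\psi_q^{(k)}$ (with the measure $\gamma_q$), whereas you use the explicit series \eqref{in07}, which yields the same reduction with cleaner sign bookkeeping and avoids the measure-theoretic apparatus (the paper's display in fact contains typos, e.g.\ $e^{-b_i}$ for $e^{-b_i t}$, that your version silently repairs). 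For the positivity itself, the paper simply cites Alzer's inequality \eqref{eqal} for decreasing convex functions, while you re-prove it from scratch via the tangent-line bound plus Abel summation; this is the standard majorization argument, it is correct, and it makes the application self-contained --- note your argument only needs the $b_i$ sorted, with the ordering of the $a_i$ unused, which is consistent with the hypotheses. The final transfer step also differs: the paper goes classical log-complete monotonicity $\Rightarrow$ classical complete monotonicity of $\exp(-h)=G_{p,q}$ (another result of Alzer) $\Rightarrow$ $q$-complete monotonicity via \cite[Theorem 2.8]{KK}, an external citation, whereas you route through the paper's own Theorem \ref{thff1} (classical log-CM $\Rightarrow$ $q$-log-CM) and Theorem \ref{th001} ($q$-log-CM $\Rightarrow$ $q$-CM), which keeps the argument internal to the paper and is in fact the same pattern the paper uses for its other application, Theorem 3.1. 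Two trivial remarks: your ``exactly when'' claim about the $c_j$ is stronger than needed (only sufficiency is used), and since $2\leq E_q\leq e$ the distinction between $\log$ and $Log_q$ is harmless for sign purposes, so your use of the natural logarithm is compatible with the paper's definitions.
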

\begin{proof}
First, we define
\begin{equation*}
h(x)=\sum_{i=1}^{n}\left[\log \Gamma _{q}(x+b_{i})-\log \Gamma _{q}(x+a_{i})\right].
\end{equation*}
Then, for $k\in \mathbb{N}_0,$ we have
\begin{align*}
(-1)^{k}\big( h^{\prime }\left( x\right) \big) ^{\left( k\right)}&
=(-1)^{k}\sum_{i=1}^{n}\left[\psi _{q}^{(k)}(x+b_{i})-\psi
_{q}^{(k)}(x+a_{i})\right] \\
& =(-1)^{k}\sum_{i=1}^{n}(-1)^{k+1}\int_{0}^{\infty }\frac{
t^{k}\;{\rm e}^{-xt}}{1-{\rm e}^{-t}} \cdot\left({\rm e}^{-bi}-{\rm e}^{-ai}\right)d\gamma _{q}(t)\\
& =(-1)^{2k+1}\int_{0}^{\infty }\frac{t^{k}\;{\rm e}^{-xt}}{1-{\rm e}^{-t}}
 \cdot \sum_{i=1}^{n}\left({\rm e}^{-bi}-{\rm e}^{-ai}\right)d\gamma _{q}(t).
\end{align*}
Alzer \cite{AL0} showed that, if $f$ is a decreasing and convex function on $\mathbb{R}$, then
\begin{equation}
\sum_{i=1}^{n}f(b_{i})\leqq \sum_{i=1}^{n}f(a_{i}).  \label{eqal}
\end{equation}
Thus, since the function $z\mapsto {\rm e}^{-z}\;\;(z>0)$ is a decreasing and convex on $\mathbb{R}$, we have
\begin{equation*}
\sum_{i=1}^{n}\left({\rm e}^{-ai}-{\rm e}^{-bi}\right)\geqq 0,
\end{equation*}
so that
\begin{equation*}
(-1)^{k}\left[G_{q}^{\prime }(x)\right]^{(k)}\geqq 0 \qquad (k \in \mathbb{N}_0).
\end{equation*}
Hence $h^{\prime }$ is a completely monotonic function on $(0,\infty )$,
Using the fact that if $h^{\prime }$ is a completely monotonic function on $(0,\infty )$, then $\exp (-h)$ is also a completely monotonic function on $(0,\infty )$ (see \cite{AL0}), by \cite[Theorem 2.8 ]{KK} we have  $\exp (-h)$ is also a $q$-completely monotonic function on $(0,\infty )$, which completes the proof.
\end{proof}

\begin{remark} This is a corrected version of paper (see \cite{KT}).

\end {remark}

\noindent{Acknowledgment}. The authors thank Professor Mourad E. H. Ismail for suggesting the problem.



\begin{thebibliography}{10}
\bibitem{AS}
M. Abramowitz and I.A. Stegun, Handbook of Mathematical Functions with Formulas and Mathematical Tables, Dover, NewYork, 1965.

\bibitem{AL0}
H. Alzer, On some inequalities for the gamma and psi
functions, \textit{Math. Comput.} \textbf{66} (1997), 373--389.

\bibitem{A}
R. Askey, The $q$-gamma and $q$-beta functions, {\em Applicable Anal.} {\bf8(2)} (1978/79) 125--141.

\bibitem{B}
E.T. Bell, Exponential polynomials, {\em Ann. of Math.} {\bf33} (1934) 258--277.

\bibitem{Ch}
Ch.-P. Chen and F. Qi,  Logarithmically completely monotonic functions relating to the gamma functions, J. Math. Anal. Appl. {\bf 321} (2006), 405–411.


\bibitem{CCNK}
K.-S. Chung, W.-S. Chung, S.-T. Nam and H.-J. Kang, New $q$-derivative
and $q$-Logarithm, {\rm International Journal of Theoretical Physics}, {\bf33:10}
(1994), 2019--2029.

\bibitem{CKK}
W.S. Chung, T. Kim, and H.I. Kwon, On the $q$-analog of the Laplace transform, {\em Russ. J. Math. Phys.} {\bf21} (2014), no. 2, 156--168.

\bibitem{DG}
L. Dhaouadi, S. Guesmi and A. Fittouhi, $q$-Bessel positive definite and $D_q$-completely monotonic functions, {\em Positivity} {\bf16} (2012) 107--129.

\bibitem{E0}
T. Ernst, The history of $q$-calculus and a new method, U.U.D.M. Report
2000:16, Department of Mathematics, Uppsala University, (2000),

\bibitem{Ex}
H. Exton, $q$-Hypergeometric functions and Applications, Ellis Horwood,
Chichester, (1983).

\bibitem{E1}
T. Ernst, A Comprehensive Treatment of $q$-Calculus, Birkhauser (2012).

\bibitem{IM}
M.E.H. Ismail and M.E. Muldoon, Inequalities and monotonicity properties for gamma and $q$-gamma functions, in: R.V.M. Zahar (Ed.), Approximation and Computation, International Series of Numerical Mathematics, vol. 119, Birkh\"auser, Boston, MA, 1994, 309--323.

\bibitem{K1}
T. Kim, On a $q$-analogue of the $p$-adic log gamma functions and related integrals, {\em J. Number
Theory} {\bf76} (1999) 320--329.

\bibitem{K2}
T. Kim, A note on the $q$-multiple zeta functions, {\em Advan. Stud. Contemp. Math.} {\bf8} (2004) 111--113.

\bibitem{K3}
T. Kim, Some identities on the $q$-integral representation of the product of several $q$-Bernstein-type polynomials, {\em Abstr. Appl. Anal.} {\bf2011}, Art. ID 634675.

\bibitem{K4}
T. Kim, Some formulae for the $q$-Bernstein polynomials and $q$-deformed binomial distributions, {\em J. Comput. Anal. Appl.} {\bf14} (2012), no. 5, 917--933.

\bibitem{K3}
T. Kim and S.H. Rim, A note on the $q$-integral and $q$-series, {\em Advanced Stud. Contemp. Math.} {\bf2}
(2000) 37--45.


\bibitem{KK}
C.G. Kokologiannaki and V. Krasniqi, $q$-Completely monotonic and $q$-Bernstein functions, {\em Journal of Applied Mathematics, Statistics and Informatics}, to appear.

\bibitem{KM}
V. Krasniqi, F. Merovci, Some completely monotonic properties for the (p,q)-gamma function, Math.Balkanica, New Series 26 (2012), 1-2.

\bibitem{KT}
V. Krasniqi, T. Mansour, q-Bernstein functions and applications, Advanced Studies in Contemporary Mathematics 25:1 (2015) 75--83.


\bibitem{R}
J. Riordan, An Introduction to Combinatorial Analysis, John Wiley, New York, 1958, Princeton University Press, Princeton, NJ, 1980.

\bibitem{SSV}
R.L. Schilling, R.Song and Z.Vondracek, Bernstein functions, Theory and Application, De Gruyter, (2010).

\bibitem{EI}
N.J.A. Sloane,
The on-line encyclopedia of integer sequences,
{\tt http://oeis.org}.

\bibitem{XC}
A. Xu and Z. Cen, On a $q$-analogue of Fa\'a di Burno's determinant formula, {\em Discr. Math.} {\bf311} (2011) 387--392.
\end{thebibliography}
\end{document}